\newtheorem{thm}{Theorem}
\newtheorem{lem}{Lemma}
\newcommand{\E}{\mathbb{E}}
\newcommand{\Prob}{\mathsf{P}}
\date{}
\begin{document}

\mainmatter

\title{Global clustering coefficient in scale-free networks}

\titlerunning{Global clustering coefficient in scale-free networks}

\author{Liudmila~ Ostroumova Prokhorenkova\inst{1}\inst{2}
\and Egor~Samosvat\inst{1}\inst{3}}

\institute{Yandex, Moscow, Russia
\and
Moscow State University, Moscow, Russia
\and
Moscow Institute of Physics and Technology, Moscow, Russia}

\maketitle

\begin{abstract}
In this paper, we analyze the behavior of the global clustering coefficient in scale free graphs. We are especially interested in the case of degree distribution with an infinite variance, since such degree distribution is usually observed in real-world networks of diverse nature.

There are two common definitions of the clustering coefficient of a graph: global clustering and average local clustering. It is widely believed that in real networks both clustering coefficients tend to some positive constant as the networks grow. There are several models for which the average local clustering coefficient tends to a positive constant. On the other hand, there are no models of scale-free networks with an infinite variance of degree distribution and with a constant global clustering.

In this paper we prove that if the degree distribution obeys the power law with an infinite variance, then the global clustering coefficient tends to zero with high probability as the size of a graph grows.

\end{abstract}

\section{Introduction}\label{sec:intro}

In this paper, we analyze the global clustering coefficient of graphs with a power-law degree distribution.
Namely, we consider a sequence of graphs with the degree distribution following a regularly varying distribution $F$.
Our main result is the following.
If the degree distribution has an infinite variance, then the global clustering coefficient tends to zero with high probability.

It is important to note that we do not specify any random graph model, our result holds for any sequence of graphs.
The only restriction we have on a sequence is the distribution of degrees: we assume that the degrees of vertices (except one vertex, see the explanation at the end of Section~\ref{sec:graph}) are i.i.d.
random variables following a regularly varying distribution with a parameter $1 < \gamma <~2$.

Our results are especially interesting taking into account the fact that
it was suspected that for many types of networks both the average local and the global  clustering coefficients tend to non-zero limit as the network becomes large.
It is a natural assumption as in many observed networks the values of both clustering coefficients are considerably high \cite{Newman}.
Note that actually these observations do not contradict ours:
\begin{itemize}
\item Large values of global clustering coefficient are usually obtained on small networks.
\item For the networks with the power-law degree distribution the observed global clustering is usually less than the average local clustering, as expected. 
\item Our results can be applied only to networks with regularly varying degree distribution. If a network has, for example, a power-law degree distribution with an exponential cut-off, then our results cannot be applied.
\end{itemize}

The rest of the paper is organized as follows.
In the next section, we discuss two definitions of the clustering coefficient.
Then, in Section~\ref{sec:graph}, we formally define our restriction on the sequence of graphs.
In Section~\ref{sec:existence}, we prove that a simple graph with the given degree sequence exists with high probability.
In Section~\ref{sec:global_clustering}, we prove that the global clustering coefficient for any such sequence of graphs tends to zero.
Then we discuss one graph constructing procedure which gives a sequence of graphs with superlinear number of triangles, but the global clustering coefficient for such sequence still tends to zero.
Section~\ref{sec:conclusion} concludes the paper.

\section{Clustering coefficients}\label{sec:clustering}

There are two popular definitions of the clustering coefficient \cite{Math_Results,Newman}. The \emph{global clustering coefficient} $C_1(G_n)$ is the ratio of three times the number of triangles to the number of pairs of adjacent edges in $G_n$. The \emph{average local clustering coefficient} is defined as follows: $C_2(G_n) = \frac{1}{n} \sum_{i=1}^n C(i)$, where $C(i)$ is the local clustering coefficient for a vertex $i$: $C(i) = \frac{T^i}{P_2^i}$, where $T^i$ is the number of edges between the neighbors of the vertex $i$ and $P_2^i$ is the number of pairs of neighbors.
Note that both clustering coefficients equal $1$ for a complete graph.

It was mentioned in \cite{Math_Results,Newman} that in research papers either average local or global clustering are considered.
And it is not always clear which definition is used.
On the other hand, these two clustering coefficients differ.
It was demonstrated in \cite{GPA} that for networks based on the idea of preferential attachment the difference between these two clustering coefficients is crucial.



Note that both definitions of the clustering coefficient work only for graphs without multiple edges.
Also, most measurements on real-world networks do not take multiple edges into account.
Therefore, further we consider only simple graphs: graphs without loops and multiple edges.
Clustering coefficient for weighted graphs can also be defined (see, e.g., \cite{MultClust}).
We leave the analysis of the clustering coefficient in weighted graphs for the future work



\section{Scale-free graphs}\label{sec:graph}

We consider a sequence of graphs $\{G_n\}$.
Each graph $G_n$ has $n$ vertices.
We assume that the degrees of these vertices are independent random variables following a {\it regularly varying} distribution with a cumulative distribution function $F$ such that:
\begin{equation}\label{eq:regular}
1-F(x)=L(x)x^{-\gamma},\quad x>0,
\end{equation}
where $L(\cdot)$ is a slowly varying function, that is, for any fixed constant $t>0$
\[\lim_{x\to\infty}\frac{L(tx)}{L(x)}=1.\]
There are other obvious restrictions on the function $L(\cdot)$,
for instance, the function $1-L(x)x^{-\gamma}$ must be a cumulative distribution function of a random variable taking positive integer values with probability 1.
Further in this paper we use the following property of slowly varying functions:
$L(x) = o\left(x^c\right)$ for any $c>0$.

Note that \eqref{eq:regular} describes a broad class of heavy-tailed distributions without imposing the rigid Pareto assumption.
Power-law distribution with parameter $\gamma + 1$ corresponds to the cumulative distribution $1-F(x)=L(x) x^{-\gamma}$.
Further by $\xi, \xi_1, \xi_2, \ldots$ we denote random variables with the distribution $F$.
Note that for any $\alpha < \gamma$ the moment $\E \xi^{\alpha}$ is finite.



Models with $\gamma > 2$ and with the global clustering coefficient tending to some positive constant were already proposed (see, e.g., \cite{GPA}).
Therefore, in this paper we consider only the case $1<\gamma < 2$.

One small problem remains: we can construct a graph with a given degree distribution only if the sum of degrees is even. This problem is easy to solve: we can either regenerate
the degrees until their sum is even or we can add 1 to the last variable if their sum is odd \cite{Configuration}.
For simplicity we choose the second option, i.e., if $\sum_{i=1}^n \xi_i$ is odd, then we replace $\xi_n$ by $\xi_n+1$.
It is easy to see that this correction does not change any of our results, therefore,
further we do not focus on the evenness.

\section{Existence of a graph with given degree distribution}\label{sec:existence}

\subsection{Result}

As pointed out in \cite{Molloy}, the probability of obtaining a simple graph with given degree distribution by random pairing of edges' endpoints (configuration model) converges to a strictly positive constant if the degree distribution has a finite second moment.
In our case the second moment is infinite and it can be shown that the probability of obtaining a simple graph just by random pairing of edges' endpoints tends to zero with $n$.

However, we can prove that in this case a simple graph with a given degree distribution exists with high probability
and it can be constructed, e.g., using Havel-Hakimi algorithm \cite{Hakimi,Havel}.

\begin{thm}\label{thm:existence}
For any $\delta$ such that $1< \delta < \gamma$ with probability $1-O\left(n^{1-\delta}\right)$
there exists a simple graph on $n$ vertices with the degree distribution defined above.

\end{thm}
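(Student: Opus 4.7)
My plan is to apply the Erd\H{o}s--Gallai criterion: a non-increasing sequence $d_{(1)} \geq \cdots \geq d_{(n)}$ of non-negative integers with even sum is realizable by a simple graph if and only if
\[
\sum_{i=1}^k d_{(i)} \leq k(k-1) + \sum_{i=k+1}^n \min(d_{(i)}, k) \qquad (\star)
\]
for every $1 \leq k \leq n$. The parity issue was already handled by the convention in Section~\ref{sec:graph}, so the task reduces to showing that $(\star)$ holds simultaneously for all $k$ with probability at least $1 - O(n^{1-\delta})$; the Havel--Hakimi algorithm cited in the statement can then be used to actually construct the graph.

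As a first step I would bound the maximum degree. Fix any $\delta' \in (\delta,\gamma)$ and set $M_n = n^{\delta'/\gamma}$. A union bound combined with the tail estimate $\Prob(\xi > M_n) = L(M_n)\,M_n^{-\gamma}$ yields
\[
\Prob(d_{(1)} > M_n) \leq n\,L(M_n)\,M_n^{-\gamma} = L(M_n)\,n^{1-\delta'} = o(n^{1-\delta}),
\]
since $L$ is slowly varying and $\delta' > \delta$. From now on I work on the event $\{d_{(1)} \leq M_n\}$.

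I would then verify $(\star)$ in two regimes. For $k > M_n$ every $d_{(i)}$ is at most $k$, so $(\star)$ collapses to $2\sum_{i=1}^k d_{(i)} \leq k(k-1) + S_n$ with $S_n = \sum_i \xi_i$. Since $\gamma > 1$, $\E\xi$ is finite, and truncating the $\xi_i$ at $M_n$ makes $S_n$ amenable to a Chebyshev estimate (the truncated second moment is of order $M_n^{2-\gamma}$), giving $S_n \geq \tfrac12 n \E\xi$ outside an event of probability $o(n^{1-\delta})$. A standard order-statistics estimate for regularly varying tails bounds the left-hand side by $O(n^{1/\gamma}\,k^{1-1/\gamma})$ (up to a slowly varying factor) uniformly in $k$; since $k > M_n \geq n^{1/(\gamma+1)}$ implies $k^2 \gtrsim n^{1/\gamma} k^{1-1/\gamma}$, the inequality $(\star)$ follows throughout this range. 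For $k \leq M_n$ I would lower-bound the right-hand side by $k(N_{\geq k+1} - k)$, where $N_{\geq k+1} = |\{i : \xi_i \geq k+1\}|$ has mean $n\,\Prob(\xi \geq k+1)$; Bernstein-type concentration for $N_{\geq k+1}$, the same order-statistics upper bound on the left-hand side, and a union bound over the $\leq M_n$ relevant values of $k$ together give $(\star)$ on an event of failure probability $o(n^{1-\delta})$.

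The main obstacle, I expect, is the transition regime $k\approx M_n$, where $k^2$, the order-statistics sum $n^{1/\gamma}k^{1-1/\gamma}$, and the capacity term $k\,N_{\geq k+1}$ are all of comparable order. There the naive bound $\sum_{i=1}^k d_{(i)}\leq k\,d_{(1)}\leq kM_n$ loses a polynomial factor and must be replaced by the sharper order-statistics estimate, while the slowly varying factors $L$ have to be tracked carefully. A dyadic decomposition of the range $k \in [1, M_n]$ should keep the union-bound loss negligible, and this is precisely the step where the strict inequality $\delta < \gamma$ (as opposed to $\delta \leq \gamma$) is actually used.
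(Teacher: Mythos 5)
Your overall strategy (Erd\H{o}s--Gallai plus a split of the range of $k$) is the same as the paper's, and your large-$k$ regime goes through --- in fact for $k>M_n$ you do not need the order-statistics estimate at all, since $k(k-1)\geq n^{2\delta'/\gamma}\gg n$ while $\sum_{i=1}^k d_{(i)}\leq 2E(G_n)=O(n)$ with the required probability; this is exactly how the paper disposes of all $k\geq\sqrt{2\E\xi\,n}$. The genuine gap is in your small-$k$ regime, and it is not a matter of slowly varying factors or of union bounds: your lower bound $k(k-1)+k\left(N_{\geq k+1}-k\right)=kN_{\geq k+1}-k$ on the right-hand side of $(\star)$ throws away the contribution of every vertex of degree less than $k$, each of which still contributes $\min(d_i,k)=d_i\geq 1$. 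To see that this is fatal, take the pure Pareto case $1-F(x)=x^{-\gamma}$. The true transition sits at $k_0\asymp n^{1/(\gamma+1)}$ (not at $k\approx M_n$, as you claim --- at $k=M_n$ the three quantities you list are of different polynomial orders), and there $k_0^2$, $k_0N_{\geq k_0}\approx nk_0^{1-\gamma}$ and $n^{1/\gamma}k_0^{1-1/\gamma}$ are all $\asymp n^{2/(\gamma+1)}$. But the constants do not close: the top-$k_0$ sum concentrates near $\frac{\gamma}{\gamma-1}\,n^{1/\gamma}k_0^{1-1/\gamma}=\frac{\gamma}{\gamma-1}\,n^{2/(\gamma+1)}$ with $\frac{\gamma}{\gamma-1}>2$ for $1<\gamma<2$, whereas your lower bound on the right-hand side is only $(1+o(1))\,n^{2/(\gamma+1)}$ in a window $k\asymp n^{1/(\gamma+1)}$. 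So the inequality you are trying to verify fails for your surrogate quantities even at the level of expectations, and no refinement of the concentration step can repair it.

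The missing idea, which is the one nontrivial observation in the paper's proof, is that because the degrees are positive integers one has $\sum_{i=k+1}^n\min(d_i,k)\geq n-k$, which is $\Theta(n)\gg n^{2/(\gamma+1)}$ throughout the whole range $k<\sqrt{2\E\xi\,n}$. The verification of $(\star)$ for small $k$ then reduces to showing that the sum of the top $\sqrt{2\E\xi\,n}$ order statistics is at most $n-\sqrt{2\E\xi\,n}$; the paper does this by dominating that sum by the truncated sum $S_n=\sum_i\xi_iI\left[\xi_i>n^{\beta}\right]$ (after checking that at least $\sqrt{2\E\xi\,n}$ of the $\xi_i$ exceed $n^{\beta}$), whose mean is $\sim\frac{\gamma}{\gamma-1}n^{1+\beta(1-\gamma)}L\left(n^{\beta}\right)=o(n)$, and by a moment inequality of order $\alpha\in(\delta,\gamma)$ for the deviation. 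I would recommend you replace your capacity bound $kN_{\geq k+1}$ by $n-k$ and rebuild the small-$k$ argument around an $o(n)$ estimate for the sum of the large degrees.
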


\subsection{Auxiliary results}

We use the following theorem proved by Erd\H{o}s and Gallai in 1960~\cite{ErdosGallai}.

\begin{thm}[Erd\H{o}s--Gallai]
A sequence of non-negative integers ${d_1 \geq \ldots \geq d_n}$ can be represented as the degree sequence of a finite simple graph on $n$ vertices if and only if
\begin{enumerate}
\item $d_1+\ldots+d_n$ is even;
\item $\sum^{k}_{i=1}d_i\leq k(k-1)+ \sum^n_{i=k+1} \min(d_i,k)$ holds for $1\leq k\leq n$.
\end{enumerate}
\end{thm}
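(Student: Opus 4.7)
The statement is a biconditional, so I would prove necessity ($\Rightarrow$) and sufficiency ($\Leftarrow$) separately. Necessity is a short double-counting argument; sufficiency is the substantive direction and requires a constructive or inductive argument.

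For necessity, suppose $G$ is a simple graph realizing $d_1 \geq \cdots \geq d_n$ with $\deg(v_i) = d_i$. Condition (1) follows from the handshake lemma, since $\sum_i d_i = 2|E(G)|$. For condition (2), fix $k$ and split $\sum_{i=1}^k d_i$ according to where the edge endpoints lie. Edges with both endpoints in $\{v_1,\ldots,v_k\}$ contribute at most $2\binom{k}{2} = k(k-1)$. Each vertex $v_j$ with $j > k$ can be the ``outside'' endpoint of at most $\min(d_j, k)$ edges into the top $k$: at most $d_j$ because that is its total degree, and at most $k$ because there are only $k$ possible neighbors inside. Summing over $j > k$ yields (2).

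For sufficiency, assume (1) and (2) hold. I would induct on $n$ using a Havel--Hakimi-style reduction. If $d_1 = 0$ the empty graph works, so suppose $d_1 \geq 1$. Form the reduced sequence $d'$ by deleting $d_1$ and subtracting $1$ from each of $d_2, \ldots, d_{d_1+1}$, then resorting. If $d'$ is shown to be graphical, joining a fresh vertex of degree $d_1$ to the vertices corresponding to the decremented entries yields a realization of the original sequence. Condition (1) is visibly preserved, since the sum drops by $2d_1$. So the entire argument reduces to showing that $d'$ again satisfies (2), after which the inductive hypothesis completes the proof.

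The main obstacle is verifying this preservation of inequality (2) for every cutoff $k' \in \{1, \ldots, n-1\}$. Both sides of the inequality change under the reduction: the left side $\sum_{i \leq k'} d'_i$ loses the contribution of $d_1$ but inherits a shift from the sort, and the right side changes both through the $\min$-truncation and because some decremented entries may cross below $k'$. The key steps are to express each $d'$-sum as the corresponding $d$-sum plus an explicit correction, to bound those corrections by counting how many of the decremented entries lie above or below the cutoff $k'$, and then to invoke the original inequality (2) at a suitably chosen $k \in \{k', k'+1\}$. A careful case split (depending on whether $k' < d_1$, $k' = d_1$, or $k' > d_1$, and on whether $d'_{k'} \geq k'$) packages these corrections cleanly, but the bookkeeping across the cases is the genuine work of the proof.
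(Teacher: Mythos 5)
A point of orientation first: the paper does not prove this theorem at all --- it is quoted as a classical result with a citation to Erd\H{o}s and Gallai (1960) and then used as a black box, together with the Havel--Hakimi algorithm, in the proof of Theorem~1. So there is no in-paper argument to compare against, and your attempt has to stand on its own. Your necessity direction does: parity via the handshake lemma, and the split of $\sum_{i \le k} d_i$ into endpoints of edges inside the top $k$ (contributing at most $k(k-1)$) plus cross edges (at most $\min(d_j,k)$ for each outside vertex $v_j$) is the standard, complete argument.

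The sufficiency direction, however, has a genuine gap. Your plan --- induct on $n$ via the Havel--Hakimi reduction and show that the reduced sequence $d'$ again satisfies condition~(2) --- is a viable, known route, and you correctly identify that the needed claim is ``EG for $d$ implies EG for $d'$,'' which is \emph{not} the classical Havel--Hakimi exchange lemma but a separate verification. The problem is that you never perform it: your last paragraph describes the intended shape of the case analysis (corrections from the re-sort, decremented entries crossing the cutoff, invoking~(2) at $k'$ or $k'+1$) and then explicitly defers ``the bookkeeping across the cases,'' which is the entire mathematical content of the hard direction. This is exactly the step where such proofs go wrong --- the re-sorting can move decremented entries past the cutoff $k'$, and the $\min(d'_i,k')$ terms on the right-hand side can decrease --- so without the cases actually checked, what you have is a proof outline, not a proof. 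Two smaller omissions of the same kind: the reduction is only defined if $d_1 \le n-1$, and the decremented entries $d_2-1,\ldots,d_{d_1+1}-1$ must be non-negative; both follow from inequality~(2) at $k=1$, which gives $d_1 \le \#\{i \ge 2 : d_i \ge 1\}$, but neither is addressed. If you want a route where the inductive verification is genuinely manageable, consider Choudum's proof (induction on $\sum_i d_i$, decrementing $d_1$ and the last positive entry by $1$), which preserves parity automatically and requires far less bookkeeping than the Havel--Hakimi reduction.
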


In this case a sequence ${d_1 \geq \ldots \geq d_n}$ is called \emph{graphic}.

If a degree sequence is graphic, then one can use Havel-Hakimi algorithm to construct a simple graph corresponding to  it \cite{Hakimi,Havel}.
The idea of the algorithm is the following. We sort degrees in nondecreasing order $d_1 \ge \ldots \ge d_n$.
Then we take the vertex of the highest degree $d_1$ and connect this vertex to the vertices of degrees $d_2, \ldots, d_{d_1+1}$.
After this we get the degree sequence $d_2-1, \ldots, d_{d_1+1}-1, d_{d_1+2}, \ldots, d_n$ and apply the same procedure to this sequence, and so on.

We also use the following theorem several times in this paper (see, e.g., \cite{RegVar}).

\begin{thm}[Karamata's theorem]\label{thm:karamata}
Let $L$ be slowly varying and locally bounded in $[ x_0,\infty ]$ for some $x_0 \geq 0$. Then

\begin{enumerate}
\item for $\alpha > -1$
    $$
        \int_{x_0}^x t^{\alpha} L(t) dt  = (1+o(1))  (\alpha +1)^{-1} x^{\alpha+1} L(x),\,\,\,\, x\to\infty\,.
    $$
\item for $\alpha < -1$
    $$
        \int_{x}^{\infty} t^{\alpha} L(t) dt   = - (1+o(1)) (\alpha +1)^{-1} x^{\alpha+1} L(x),\,\,\,\, x\to\infty\,.
    $$
\end{enumerate}
\end{thm}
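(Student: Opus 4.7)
The plan is to reduce both integrals to a dimensionless form via the substitution $t = ux$, so that the integrand becomes $u^{\alpha}\, L(ux)/L(x)$ with an overall prefactor $x^{\alpha+1} L(x)$, and then to show that the remaining integral converges to the corresponding integral of $u^{\alpha}$ alone. For part~(1), the substitution yields
\[
\int_{x_0}^{x} t^{\alpha}\, L(t)\, dt \;=\; x^{\alpha+1}\, L(x) \int_{x_0/x}^{1} u^{\alpha}\, \frac{L(ux)}{L(x)}\, du,
\]
so it suffices to show that the latter integral tends to $\int_0^1 u^{\alpha}\, du = (\alpha+1)^{-1}$. Fix a small $\delta > 0$. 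On $[\delta, 1]$ the \emph{uniform convergence theorem} for slowly varying functions gives $L(ux)/L(x) \to 1$ uniformly in $u$, so that part tends to $\int_\delta^1 u^{\alpha}\, du$. For the sliver $[x_0/x, \delta]$ I would invoke \emph{Potter's bounds}: for every $\epsilon > 0$ there exist constants $C, x_1$ such that $L(ux)/L(x) \le C \max(u^{\epsilon}, u^{-\epsilon})$ once both $x$ and $ux$ exceed $x_1$. Since $\alpha > -1$, choosing $\epsilon < \alpha + 1$ keeps $u^{\alpha - \epsilon}$ integrable at $0$, so the contribution from $[x_0/x, \delta]$ is $O(\delta^{\alpha+1-\epsilon})$ uniformly in large $x$ (with the still smaller piece $[x_0/x, x_1/x]$ handled by local boundedness of $L$). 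Sending first $x \to \infty$ and then $\delta \to 0$ finishes part~(1).

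Part~(2) is completely parallel. The substitution gives
\[
\int_{x}^{\infty} t^{\alpha}\, L(t)\, dt \;=\; x^{\alpha+1}\, L(x) \int_{1}^{\infty} u^{\alpha}\, \frac{L(ux)}{L(x)}\, du,
\]
with the target limit $\int_1^\infty u^{\alpha}\, du = -(\alpha+1)^{-1}$. Uniform convergence handles any truncation $[1, M]$, and for the tail $[M, \infty)$ Potter's bounds yield $L(ux)/L(x) \le C u^{\epsilon}$ for $u \ge 1$ and $x$ large, bounding the integrand by $C u^{\alpha + \epsilon}$. Because $\alpha < -1$ we may choose $\epsilon < -\alpha - 1$, making this the tail of a convergent integral; the tail contribution is $O(M^{\alpha+1+\epsilon}) \to 0$ as $M \to \infty$, uniformly in $x$. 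Dominated convergence then delivers the claim.

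The main obstacle is not the bookkeeping above but the structural theory of slowly varying functions needed for the two tools, namely the uniform convergence theorem and Potter's bounds. Both flow from the Karamata representation $L(x) = c(x)\, \exp\!\int_{x_0}^{x} \epsilon(t)/t\, dt$ with $c(x) \to c > 0$ and $\epsilon(t) \to 0$. Since these are standard results and the paper already cites \cite{RegVar}, I would quote them rather than reprove them. Once quoted, both halves reduce to the same change of variables; the dichotomy at $\alpha = -1$ is forced precisely by the range of exponents $\alpha \pm \epsilon$ for which $u^{\alpha \pm \epsilon}$ remains integrable at the relevant endpoint ($0$ in part~(1), $\infty$ in part~(2)).
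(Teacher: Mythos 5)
The paper offers no proof of this statement: Karamata's theorem is quoted as a known classical result with a pointer to Bingham--Goldie--Teugels \cite{RegVar}, so the only comparison available is with the standard proof from that reference, which is essentially what you give. Your argument is correct --- the substitution $t = ux$, the uniform convergence theorem on $[\delta,1]$ (resp.\ $[1,M]$), Potter's bounds for the endpoint sliver, and local boundedness for the piece where $t \in [x_0, x_1]$ is exactly the textbook route; the one tacit point worth making explicit is that in part (2) the same Potter bound is also what guarantees finiteness of $\int_{x}^{\infty} t^{\alpha} L(t)\,dt$ in the first place, which your dominated-convergence step quietly assumes.
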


We also use the following known lemma (proof can be found, e.g., in~\cite{fresh_model}).

\begin{lem}\label{lem:moments}
Let $\xi_1, \dots, \xi_n$ be mutually independent random variables, $\E \xi_i = 0$, $\E |\xi_i|^{\alpha} < \infty$, $1\le \alpha \le 2$, then
$$
\E \left[ |\xi_1 + \ldots + \xi_n|^{\alpha} \right] \le 2^{\alpha} \left(\E\left[|\xi_1|^{\alpha}\right]+\ldots+\E\left[|\xi_n|^{\alpha}\right] \right)\,.
$$
\end{lem}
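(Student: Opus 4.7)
My plan is to verify the Erdős--Gallai criterion for the sorted degree sequence $d_1\ge\cdots\ge d_n$ on a high-probability event; granted that, the Havel--Hakimi construction realises the sequence as a simple graph. The parity condition is built into the sampling procedure of Section~\ref{sec:graph}, so the task reduces to checking that
\[
\sum_{i=1}^{k}d_i \;\le\; k(k-1)+\sum_{i=k+1}^{n}\min(d_i,k)
\]
holds for every $k\in\{1,\ldots,n\}$ with probability $1-O(n^{1-\delta})$.

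The central probabilistic tool is Lemma~\ref{lem:moments} used with exponent $\alpha=\delta$. Since $1<\delta<\gamma$, the mean $\mu:=\E\xi$ is finite and the centred variables $\xi_i-\mu$ have finite $\delta$-th moment, so the lemma gives $\E|S_n-n\mu|^\delta\le C n$ and Markov's inequality yields $S_n\le 2\mu n$ with probability $1-O(n^{1-\delta})$. Applied to the centred variables $(\xi_i-k)^+-\E(\xi-k)^+$ and $\mathbf{1}[\xi_i\ge k]-\Prob(\xi\ge k)$, the same lemma delivers concentration of the sums $Z_k:=\sum_{i}(\xi_i-k)^+$ and $n_k:=|\{i:\xi_i\ge k\}|$ around their expectations, which Karamata's theorem (Theorem~\ref{thm:karamata}) identifies as $\E Z_k\sim n L(k)k^{1-\gamma}/(\gamma-1)$ and $\E n_k=nL(k)k^{-\gamma}$. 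Finally, $\Prob(\max_i\xi_i\ge n)\le nL(n)n^{-\gamma}=o(n^{1-\delta})$ by slow variation of $L$ and $\delta<\gamma$.

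On the intersection of these events I verify the Erdős--Gallai inequality by cases on $k$. For $k$ large enough that $k(k-1)\ge S_n$ (which, given $S_n\le 2\mu n$, occurs once $k\gtrsim\sqrt{2\mu n}$), it follows from $\sum_{i=1}^k d_i\le S_n$ alone. For small $k$, direct control of the maximum handles the case: at $k=1$ the condition reduces to $d_1\le n-1$, and for constant $k\ge 2$ it follows from the bound $M_n\lesssim n/k$, both controlled by the maximum estimate above. For intermediate $k$ I use the identity $\sum_{i=k+1}^n\min(d_i,k)=S_n-Z_k-\sum_{i=1}^k\min(d_i,k)$ together with the deterministic inequalities $\sum_{i=1}^k d_i\le k^2+Z_k$ and $\sum_{i=1}^k\min(d_i,k)\le k^2$, reducing the Erdős--Gallai condition to the sufficient inequality $2Z_k+k^2+k\le S_n$, which holds on the concentration event because $\E Z_k=o(n)$ once $k$ is large enough. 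The main obstacle I expect is the crossover range $k\asymp\sqrt{\mu n}$, where neither the trivial bound nor the relaxed sufficient condition is sharp; there one must refine the bound on $\sum_{i=1}^k d_i$ using the tighter decomposition $\sum_{i=1}^k d_i=kn_k+\sum_{i=n_k+1}^k d_i+Z_k$ (valid when $n_k\le k$) together with the concentration estimates on $n_k$ and $Z_k$. A union bound over a logarithmic grid of $k$-values, exploiting monotonicity of $Z_k$ and $n_k$ in $k$ between grid points, then delivers the overall failure probability $O(n^{1-\delta})$.
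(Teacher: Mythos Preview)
Your proposal does not address the stated lemma at all. Lemma~\ref{lem:moments} is the moment inequality
\[
\E\bigl|\xi_1+\cdots+\xi_n\bigr|^{\alpha}\le 2^{\alpha}\sum_{i=1}^{n}\E|\xi_i|^{\alpha}
\]
for independent mean-zero variables and $1\le\alpha\le 2$. What you have written is instead a sketch of a proof of Theorem~\ref{thm:existence} (existence of a simple graph realising the degree sequence), in which Lemma~\ref{lem:moments} appears only as a tool. None of the Erd\H{o}s--Gallai analysis, the case split on $k$, or the concentration of $Z_k$ and $n_k$ has any bearing on establishing the moment bound itself.

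For the record, the paper does not prove Lemma~\ref{lem:moments} either: it simply quotes it as a known result and gives a reference (\cite{fresh_model}); the inequality is a standard von Bahr--Esseen / Marcinkiewicz--Zygmund type bound. A correct proof would proceed, for example, by symmetrisation (introducing an independent copy $(\xi_i')$ and using $\E|S|^\alpha\le\E|S-S'|^\alpha$ for mean-zero $S$), followed by the fact that for symmetric independent summands and $1\le\alpha\le2$ one has $\E|T_1+\cdots+T_n|^\alpha\le(\E|T_1+\cdots+T_n|^2)^{\alpha/2}\cdot\text{const}$ or, more directly, the elementary inequality $\E|X+Y|^\alpha\le\E|X|^\alpha+\E|Y|^\alpha$ for independent symmetric $X,Y$ (which follows from concavity of $t\mapsto t^{\alpha/2}$ applied conditionally). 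Your write-up contains no such argument.
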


\subsection{Proof of Theorem~\ref{thm:existence}}

We need the following lemma on the number of edges in the graph.

\begin{lem}\label{lem:edges}
For any $\theta$ such that $1<\theta<\gamma$ with probability $1-O(n^{1-\theta})$ the number of edges $E(G_n)$ in our graph satisfies the following inequalities:
$$
\frac{n \E \xi}{4} \le E(G_n) \le \frac{3n\E\xi}{4}\,.
$$
\end{lem}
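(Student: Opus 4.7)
The plan is to reduce the statement to a concentration bound on the sum $S_n = \xi_1 + \ldots + \xi_n$, since $2E(G_n) = S_n$ (the parity correction adds at most $1$, which is negligible). It therefore suffices to show that $\Prob\bigl(|S_n - n\E\xi| \ge n\E\xi/2\bigr) = O(n^{1-\theta})$. This will give the two-sided inequality on $E(G_n)$ on the complementary event.

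The key tool is Lemma~\ref{lem:moments} applied to the centered variables $\eta_i = \xi_i - \E\xi$ with exponent $\alpha = \theta$. This is legitimate because $1 < \theta < \gamma < 2$, and because $\E|\xi_i|^\theta < \infty$ (as noted after \eqref{eq:regular}, every moment of order less than $\gamma$ is finite), which combined with $\E\xi < \infty$ yields $\E|\eta_i|^\theta < \infty$. The lemma then gives
\[
\E\bigl|S_n - n\E\xi\bigr|^\theta \;\le\; 2^\theta\, n\, \E|\eta_1|^\theta \;=\; O(n).
\]

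Applying Markov's inequality,
\[
\Prob\!\left(|S_n - n\E\xi| \ge \tfrac{n\E\xi}{2}\right) \;\le\; \frac{\E|S_n - n\E\xi|^\theta}{(n\E\xi/2)^\theta} \;=\; O\!\left(\frac{n}{n^\theta}\right) \;=\; O(n^{1-\theta}).
\]
On the complementary event we have $n\E\xi/2 \le S_n \le 3n\E\xi/2$, hence $n\E\xi/4 \le E(G_n) \le 3n\E\xi/4$ (the constants absorb the $\pm 1$ parity correction for all sufficiently large $n$).

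There is no real obstacle: the only subtlety is making sure the hypotheses of Lemma~\ref{lem:moments} are met, which hinges on the inequality $1 < \theta < \gamma < 2$ guaranteeing both that $\E|\eta_i|^\theta$ is finite and that $\theta$ lies in the admissible range $[1,2]$ of the lemma. The bound on the probability then follows from a single application of Markov and the identity $2E(G_n) = S_n$.
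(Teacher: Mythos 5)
Your proposal is correct and matches the paper's own argument: both center the sum, apply Lemma~\ref{lem:moments} with exponent $\theta$ to get $\E|S_n - n\E\xi|^\theta = O(n)$, and conclude via Markov's inequality, yielding the same $O(n^{1-\theta})$ bound. The only cosmetic difference is that the paper works directly with $E(G_n) = \sum_i \xi_i/2$ rather than with $S_n$, and silently ignores the parity correction, which you handle explicitly.
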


\begin{proof}

The expectation of the number of edges is
$$
\E E(G_n) = n \E \xi / 2 \,.
$$

Note that for  $1 < \theta < \gamma$ we have $\E|\xi - \E\xi|^\theta < \infty$ and
\begin{multline*}
\Prob \left(|E(G_n) - n\E\xi/2| \ge n\E\xi/4\right) \le
\frac{4^\theta \E \left| \sum_{i=1}^n \left( \xi_i - \E\xi\right)/2\right|^\theta}{n^{\theta} (\E\xi)^\theta}
\\\le \frac{ 8^\theta n \E|\xi - \E\xi|^\theta}{n^{\theta} (\E\xi)^\theta}
= O\left(n^{1-\theta}\right)\,.
\end{multline*}
Here we applied Lemma~\ref{lem:moments}.
This concludes the proof of Lemma~\ref{lem:edges}.

\end{proof}

Let us order the random variables $\xi_1, \ldots, \xi_n$ and obtain the ordered sequence $d_1 \ge \ldots \ge d_n$.

We want to show that with probability $1-O\left(n^{1-\delta}\right)$ the condition

\begin{equation}\label{eq:degcondition}
\sum^{k}_{i=1}d_i\leq k(k-1)+ \sum^n_{i=k+1} \min(d_i,k)
\end{equation}
holds for all $k$, $1\leq k\leq n$.

Note that if $k \geq \sqrt{2\E\xi n}$, then with probability $1-O\left( n^{1-\delta}\right)$ we have
$$
\sum^{k}_{i=1}d_i \leq 2 E(G_n) \leq k(k-1)
$$
as  $ 2 E(G_n) \le \frac{3\E\xi}{2}n $ (here we apply Lemma~\ref{lem:edges} with $\theta=\delta$).
Therefore the condition~\eqref{eq:degcondition} is satisfied.

Now consider the case $k < \sqrt{2\E\xi n}$.
In this case we will show that
$$
\sum^n_{i=k+1} \min(d_i,k) \ge \sum^{k}_{i=1}d_i\,
$$
which implies the condition \eqref{eq:degcondition}.
Note that $\min(d_i,k) > 1$ so
$$
\sum^n_{i=k+1} \min(d_i,k)   \ge n - \sqrt{2\E\xi n}\,.
$$
It remains to show that with probability $1-O\left(n^{1-\delta}\right)$ we have
\begin{equation}\label{eq:degcondition2}
\sum^{\sqrt{2\E\xi n}}_{i=1}d_i \le n - \sqrt{2\E\xi n}\,.
\end{equation}

Fix some $\alpha$ such that $\delta < \alpha < \gamma$.
Consider any $\beta$ such that
\begin{equation}\label{eq:beta}
0 < \beta < \min\left\{\frac{2-\delta}{\gamma},\frac{1}{2\gamma},\frac{\alpha - \delta}{\gamma(\alpha-1)}\right\}
\end{equation}
and let
$$
S_n = \sum_{i=1}^n \xi_i I\left[\xi_i>n^{\beta}\right]\,.
$$
We will show that with probability $1-O\left(n^{1-\delta}\right)$ we have
\begin{equation}\label{eq:degcondition3}
\sum^{\sqrt{2\E\xi n}}_{i=1}d_i \le S_n \le n - \sqrt{2\E\xi n}
\end{equation}
which implies \eqref{eq:degcondition2}.
Note that in order to prove the left inequality it is sufficient to show that with probability $1-O\left(n^{1-\delta}\right)$ we have
$$
S'_n := \sum_{i=1}^n I\left[\xi_i>n^{\beta}\right] \ge \sqrt{2\E\xi n}\,.
$$

The expectation of $S'_n$ is
$$
\E S'_n = \E\sum_{i=1}^n I\left[\xi_i>n^{\beta}\right] =
n\Prob\left(\xi>n^{\beta}\right) =
nL\left(n^{\beta}\right) n^{-\gamma\beta}\,.
$$
Now we will show the concentration:
$$
\Prob \left( |S'_n - \E S'_n| > \frac{\E S'_n}{2} \right) \le
\frac{4 \mathrm{Var}(S'_n)}{(\E S'_n)^2} =
$$
$$
= \frac{4n \left(L\left(n^{\beta}\right) n^{-\gamma\beta} - \left(L\left(n^{\beta}\right)\right)^2 n^{-2\gamma\beta} \right)}{n^2 \left(L\left(n^{\beta}\right)\right)^2 n^{-2\gamma\beta}} =
O \left( \frac{n^{\gamma\beta}  }{n L\left(n^{\beta}\right)} \right) = O \left( n^{1-\delta} \right)\,.
$$

Here in the last equation we use the inequality $\beta < \frac{2-\delta}{\gamma}$, so $\gamma\beta-1 < 1 - \delta$.
It remains to note that as $\beta < \frac{1}{2\gamma}$ for large enough $n$ we have
$$
\frac 1 2 nL\left(n^{\beta}\right) n^{-\gamma\beta} \ge \sqrt{2 \E \xi n}\,.
$$

Now let us prove the right inequality in \eqref{eq:degcondition3}, i.e., prove that with probability $1 - O\left(n^{1-\delta}\right)$ we have
$$
S_n \le n - \sqrt{2\E\xi n}\,.
$$

As before, first we estimate the expectation of $S_n$:
\begin{multline*}
\E S_n = n \int_{n^{\beta}}^{\infty} x d F(x) =
-n \int_{n^{\beta}}^{\infty} x \, d (1- F(x))
\\
= - n \, x (1- F(x)) \bigg|_{n^{\beta}}^{\infty} + n \int_{n^{\beta}}^{\infty} (1-F(x)) \, d x
\\
= n \, n^{\beta} n^{-\gamma\beta} L\left(n^{\beta}\right) + n \int_{n^{\beta}}^{\infty} x^{-\gamma}L(x) \, d x
\\
\sim n^{1+\beta(1-\gamma)} L\left(n^{\beta}\right) + n (\gamma-1)^{-1} n^{\beta(1-\gamma)}  L\left(n^{\beta}\right) = \frac{\gamma}{\gamma-1}n^{1+\beta(1-\gamma)} L\left(n^{\beta}\right)\,.
\end{multline*}
In order to show concentration we first estimate
\begin{multline*}
\E \left(\xi I\left[\xi >n^{\beta}\right]\right)^\alpha =
- \int_{n^{\beta}}^{\infty} x^\alpha \, d (1- F(x))
\\
= - x^\alpha (1- F(x)) \bigg|_{n^{\beta}}^{\infty} + \int_{n^{\beta}}^{\infty} (1-F(x)) \, d x^\alpha
\\
= n^{\alpha\beta} n^{-\gamma\beta} L\left(n^{\beta}\right) + \alpha \int_{n^{\beta}}^{\infty} x^{\alpha-\gamma-1}L(x) \, d x
\\
\sim n^{\beta(\alpha-\gamma)} L\left(n^{\beta}\right) + (\gamma-\alpha)^{-1} n^{\beta(\alpha-\gamma)}  L\left(n^{\beta}\right) = \frac{\gamma+1-\alpha}{\gamma-1}n^{\beta(\alpha-\gamma)} L\left(n^{\beta}\right)\,.
\end{multline*}
We get
\begin{multline*}
\Prob \left( |S_n - \E S_n| > \frac{\E S_n}{2} \right) \le
\frac{\E |S_n - \E S_n|^\alpha}{(\E S_n)^\alpha}
\\
= O\left( \frac{ n \E \left(\xi I\left[\xi >n^{\beta}\right]\right)^\alpha}{(\E S_n)^\alpha} \right)
= O \left( \frac{n^{1+\beta(\alpha-\gamma)} L\left(n^{\beta}\right)}{n^{\alpha(1+\beta(1-\gamma))} \left(L\left(n^{\beta}\right)\right)^\alpha} \right) = O\left(n^{1-\delta}\right)\,.
\end{multline*}
Here in the last equation we use the inequality $\beta < \frac{\alpha - \delta}{\gamma(\alpha-1)}$.

It remains to note that as $\gamma > 1$ for large $n$ we have
$$
\frac{\gamma}{2(\gamma-1)}n^{1+\beta(1-\gamma)} L\left(n^{\beta}\right) < n - \sqrt{2\E\xi n}\,.
$$

\section{Global clustering coefficient}\label{sec:global_clustering}

\subsection{Result}

\begin{thm}\label{thm:cluster}
For any $\varepsilon>0$ and any $\alpha$ such that $1<\alpha<\min\left\{2 - \frac{\gamma}{2}, \gamma \right\}$ with probability $1-O(n^{1-\alpha})$ the global clustering coefficient satisfies the following inequality
$$
C_1(G_n) \le n^{\varepsilon - \frac{\left(\gamma - 2\right)^2}{2\gamma}}\,.
$$
\end{thm}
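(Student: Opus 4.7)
The plan is to upper bound the numerator $3T(G_n)$ and lower bound the denominator $P_2(G_n)=\sum_v\binom{d_v}{2}$, each with probability $1-O(n^{1-\alpha})$, and then divide. The key deterministic input is the observation that for any graph and any vertex $v$ the number of triangles through $v$ equals the number of edges in the neighborhood $N(v)$, and is therefore at most $\min(\binom{d_v}{2},E(G_n))$. Summing over $v$ and counting each triangle three times,
\[
3\,T(G_n)\le \sum_{v}\min\!\Bigl(\tbinom{d_v}{2},\,E(G_n)\Bigr).
\]
Let $M=\lceil\sqrt{2E(G_n)}\rceil$, so that the minimum equals $\binom{d_v}{2}$ exactly when $d_v\le M$. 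Applying Lemma~\ref{lem:edges} with $\theta=\alpha$, with probability $1-O(n^{1-\alpha})$ we have $E(G_n)\le\tfrac34 n\E\xi$ and hence $M=O(\sqrt n)$, and on this event
\[
3\,T(G_n)\le \sum_{v:\,d_v\le M}\tbinom{d_v}{2}+E(G_n)\cdot\#\{v:d_v>M\}.
\]

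Both pieces are estimated by the second-moment method. Karamata's theorem (part~1, exponent $1-\gamma>-1$) yields $\E\bigl[\xi(\xi-1)\,I[\xi\le M]\bigr]=O(M^{2-\gamma}L(M))$, so the first sum has expectation $O(nM^{2-\gamma}L(M))=O(n^{2-\gamma/2}L(\sqrt n))$; every summand is bounded by $M^2=O(n)$, so the variance is at most $n\cdot M^2\cdot \E\bigl[\xi(\xi-1)\,I[\xi\le M]\bigr]=O(n^{3-\gamma/2}L(\sqrt n))$, and Chebyshev gives a factor-two deviation probability of order $n^{-(2-\gamma)/2}$, which is $O(n^{1-\alpha})$ exactly when $\alpha<2-\gamma/2$. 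The count $\#\{v:d_v>M\}$ is a sum of i.i.d.\ Bernoullis with mean $n\Prob(\xi>M)=O(n^{1-\gamma/2}L(\sqrt n))$ and enjoys the same Chebyshev concentration as used in Section~\ref{sec:existence}; multiplying by $E(G_n)=O(n)$ shows that the second term is likewise $O(n^{2-\gamma/2}L(\sqrt n))$. Absorbing the slowly varying factor $L(\sqrt n)=o(n^{\varepsilon/2})$, I obtain $T(G_n)\le n^{2-\gamma/2+\varepsilon/2}$ with the required probability.

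For the denominator I would use $P_2(G_n)\ge\binom{d_{\max}}{2}$ where $d_{\max}=\max_i\xi_i$. Since
\[
\Prob\bigl(d_{\max}\le n^{1/\gamma-\varepsilon/4}\bigr)=F\bigl(n^{1/\gamma-\varepsilon/4}\bigr)^n\le\exp\!\bigl(-L(n^{1/\gamma-\varepsilon/4})\,n^{\gamma\varepsilon/4}\bigr)
\]
decays super-polynomially in $n$, with probability $1-O(n^{1-\alpha})$ we have $P_2(G_n)\ge \tfrac12 n^{2/\gamma-\varepsilon/2}$. Combining the two estimates and using the algebraic identity $2-\gamma/2-2/\gamma=-(\gamma-2)^2/(2\gamma)$,
\[
C_1(G_n)=\frac{3T(G_n)}{P_2(G_n)}\le n^{\varepsilon-(\gamma-2)^2/(2\gamma)}
\]
with the claimed probability. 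The main obstacle is identifying a sufficiently strong upper bound for $T(G_n)$: naive estimates like $T\le E(G_n)^{3/2}$ or $T\le P_2(G_n)/3$ are too weak by polynomial factors, whereas the clipped bound $T_v\le\min(\binom{d_v}{2},E(G_n))$ with threshold $M\simeq\sqrt{E(G_n)}$ exactly balances the low-degree and high-degree contributions at the target exponent $n^{2-\gamma/2}$; once this split is in place, the remaining steps are routine Karamata integrations and Chebyshev-style concentration analogous to the proof of Theorem~\ref{thm:existence}.
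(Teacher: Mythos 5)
Your proposal is correct and follows essentially the same route as the paper: the same key bound (triangles through a vertex at most $\min\bigl(\binom{d_v}{2},E(G_n)\bigr)$), the same split of the degree sequence at a threshold of order $\sqrt{n}$ with Karamata plus Chebyshev/Chernoff concentration for each piece, and the same lower bound $P_2(G_n)\ge\binom{d_{\max}}{2}$ with a tail estimate on $\xi_{\max}$. The only cosmetic difference is that you clip at the random threshold $M=\lceil\sqrt{2E(G_n)}\rceil$ before replacing it by deterministic thresholds of order $\sqrt{n}$, whereas the paper works with the deterministic cutoffs $\sqrt{\E\xi\, n/4}$ and $\sqrt{3\E\xi\, n/4}$ from the start.
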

Taking small enough $\varepsilon$ one can see that with high probability $C_1(G_n) \to 0$ as $n$ grows.

\subsection{Proof of Theorem~\ref{thm:cluster}}

We will use the following estimate for $C_1(G_n)$:
\begin{equation}\label{eq:cluster}
C_1(G_n) \le \frac{E(G_n) \left|\{i: \xi_i^2 \ge E(G_n)\}\right|  + \sum_{i: \xi_i^2<E(G_n)} \xi_i^2}{P_2(n)}.
\end{equation}
Here $P_2(n)$ is the number of pairs of adjacent edges in $G_n$.
In order to obtain inequality \eqref{eq:cluster} we use the following observation. The number of triangles connected to a vertex cannot be larger than the number of edges in a graph. Also, this number cannot be larger than the degree squared.

Using Lemma~\ref{lem:edges} (with $\theta=\alpha$) we get that with probability $1 - O\left(n^{1-\alpha}\right)$
\begin{equation}\label{eq:clust}
C_1(G_n) \le \frac{ \frac{3\E\xi n}{4} \left|\left\{i: \xi_i^2 \ge \frac{\E\xi n}{4}\right\}\right|  + \sum_{i: \xi_i^2<\frac{3\E\xi n}{4}} \xi_i^2 }{P_2(n)}.
\end{equation}

Let us find a lower bound for the number of pairs of adjacent edges $P_2(n)$.
\begin{lem}\label{lem:p2}
For any $\delta > 0$  and any $\alpha$ such that $1<\alpha<\gamma$  with probability $1-O\left(n^{1-\alpha}\right)$ we have
$$
P_2(n) \ge n^{\frac{2}{\gamma}- \delta}\,.
$$
\end{lem}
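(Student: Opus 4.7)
The plan is to observe that $P_2(n)$ is dominated by just the single vertex of maximum degree. Since in a simple graph a vertex of degree $d$ accounts for exactly $\binom{d}{2}$ pairs of adjacent edges, we have
$$ P_2(n) = \sum_{i=1}^n \binom{\xi_i}{2} \ge \binom{M_n}{2}, \qquad M_n := \max_{1\le i\le n}\xi_i. $$
So it is enough to prove that with probability $1 - O(n^{1-\alpha})$ the maximum degree satisfies $M_n \ge n^{1/\gamma - \delta/3}$, because for $n$ large this already gives $\binom{M_n}{2} \ge \tfrac14 M_n^2 \ge n^{2/\gamma - \delta}$ after absorbing the $\tfrac14$ and the $(-1)$ in $M_n(M_n-1)$ into the slack in the exponent.

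The tail bound on $M_n$ is the standard extremal estimate for i.i.d.\ regularly varying variables. By independence and $1-u\le e^{-u}$,
$$ \Prob\!\left(M_n < n^{1/\gamma - \delta/3}\right) = F\!\left(n^{1/\gamma - \delta/3}\right)^{\!n} \le \exp\!\left(-n\,L\!\left(n^{1/\gamma - \delta/3}\right) n^{-1+\gamma\delta/3}\right) = \exp\!\left(-L\!\left(n^{1/\gamma - \delta/3}\right) n^{\gamma\delta/3}\right). $$
Since $L$ is slowly varying, $L\!\left(n^{1/\gamma - \delta/3}\right) = n^{o(1)}$, so the right-hand side decays faster than any polynomial in $n$, and in particular it is $O(n^{1-\alpha})$ for any $\alpha>1$. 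Combining the two steps yields the lemma.

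I do not expect a real obstacle here: the only thing to watch out for is the bookkeeping of exponents, where one must pick the slack $\delta/3$ small enough so that the $n^{o(1)}$ factor from the slowly varying function, the factor $\tfrac12$ from $\binom{M_n}{2}$, and the loss from passing from $M_n^2$ to $M_n(M_n-1)$ can all be absorbed into the stated $n^{2/\gamma-\delta}$. Notice that, in contrast to the proof of Lemma~\ref{lem:edges} and to the concentration estimates in Theorem~\ref{thm:existence}, Lemma~\ref{lem:moments} and Karamata's theorem are not needed: the extremal contribution of a single vertex is already far larger than a typical fluctuation of the full sum $\sum_i \binom{\xi_i}{2}$, and the probability bound we get is in fact stretched-exponentially small rather than merely polynomially so.
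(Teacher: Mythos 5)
Your proposal is correct and is essentially the paper's own argument: the paper likewise lower-bounds $P_2(n)$ by $\xi_{max}(\xi_{max}-1)/2$ for the maximum degree $\xi_{max}$ and shows $\Prob(\xi_{max}<2n^{1/\gamma-\delta/2})$ is stretched-exponentially small via the same $[\Prob(\xi<x)]^n$ computation, hence $O(n^{1-\alpha})$. The only differences are cosmetic choices of the slack in the exponent ($\delta/3$ versus $\delta/2$) and constants.
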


\begin{proof}

Let $\xi_{max} = \max\{\xi_1, \ldots, \xi_n\}$, then
$$
P_2(n) \ge  \frac{\xi_{max}(\xi_{max}-1)}{2}\,.
$$

It remains to find a lower bound for $\xi_{max}$ now:
\begin{multline*}
\Prob(\xi_{max} < 2 n^{\frac{1}{\gamma}-\frac{\delta}{2}})
= \left[ \Prob\left(\xi < 2 n^{\frac{1}{\gamma}-\frac{\delta}{2}} \right) \right]^n
= \exp\left(n \log \left(1 - \Prob(\xi \ge 2 n^{\frac{1}{\gamma}-\frac{\delta}{2}})\right)\right) \\
= \exp\left(n \log \left(1 - L\left(2 n^{\frac{1}{\gamma}-\frac{\delta}{2}}\right) 2^{-\gamma}n^{-\gamma\left(\frac{1}{\gamma}-\frac{\delta}{2}\right)}\right)\right)
\\
= \exp\left(- n \left( L\left(2 n^{\frac{1}{\gamma}-\frac{\delta}{2}}\right) 2^{-\gamma}n^{-1+\gamma\frac{\delta}{2}}\right) (1 + o(1))\right)
\\
= \exp\left(-L\left(2 n^{\frac{1}{\gamma}-\frac{\delta}{2}}\right) 2^{-\gamma} n^{\gamma\frac{\delta}{2}} (1 + o(1))\right) = O\left(n^{1-\alpha}\right).
\end{multline*}

So, with probability $1-O\left(n^{1-\alpha}\right)$ we have
$$
P_2(n) \ge n^{\frac{2}{\gamma}-\delta}\,.
$$
This concludes the proof of Lemma~\ref{lem:p2}

\end{proof}

Now we estimate the number of vertices with large degrees.
\begin{lem}\label{lem:large}
For any $\delta > 0$  and any $\alpha$ such that $1<\alpha<\gamma$  we have
$$
\Prob\left(\left|\left\{i: \xi_i \ge \sqrt{\frac{\E\xi n}{4}}\right\}\right| \le n^{1-\frac{\gamma}{2}+\delta}\right) = 1 - O\left(n^{1-\alpha}\right)\,.
$$
\end{lem}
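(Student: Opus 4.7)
The plan is to write $N_n := |\{i: \xi_i \ge \sqrt{\E\xi n/4}\}|$ as a sum of $n$ i.i.d.\ Bernoulli indicators with common parameter $p_n := \Prob(\xi \ge \sqrt{\E\xi n/4})$, estimate $\E N_n$ using the regularly varying assumption, and then close with a simple tail bound.

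First I would compute $\E N_n = n p_n$. Plugging the scale $\sqrt{\E\xi n/4}$ into \eqref{eq:regular} and invoking that $L$ is slowly varying (so $L(y) = y^{o(1)}$ by the bound $L(x) = o(x^c)$ for every $c>0$ quoted in Section~\ref{sec:graph}) gives
$$
p_n = L\bigl(\sqrt{\E\xi n/4}\bigr)\,(\E\xi n/4)^{-\gamma/2} = n^{-\gamma/2 + o(1)},
$$
hence $\E N_n = n^{1-\gamma/2 + o(1)}$. Since $\delta>0$ is fixed, for all sufficiently large $n$ we have $\E N_n \le n^{1-\gamma/2+\delta/2}$, which is smaller than the target $n^{1-\gamma/2+\delta}$ by a full polynomial factor $n^{\delta/2}$. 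This slack is what makes concentration easy.

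To finish I would apply a direct binomial union bound. Setting $k := \lceil n^{1-\gamma/2+\delta}\rceil$,
$$
\Prob(N_n \ge k) \le \binom{n}{k} p_n^k \le \Bigl(\frac{e n p_n}{k}\Bigr)^k.
$$
The base $e n p_n / k = e\,n^{-\delta + o(1)}$ tends to $0$, while the exponent $k = n^{1-\gamma/2+\delta}$ grows polynomially (using $\gamma<2$ to get $1-\gamma/2 > 0$). Consequently the right-hand side decays super-polynomially in $n$, and in particular is $O(n^{1-\alpha})$ for every $\alpha \in (1,\gamma)$.

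The main nuisance I anticipate is bookkeeping the $n^{o(1)}$ contributions coming from the slowly varying function $L$ --- both in $p_n$ itself and in the comparison between $\E N_n$ and the target threshold. But because the threshold already exceeds $\E N_n$ by a genuine polynomial factor $n^{\delta/2}$, these sub-polynomial factors are harmless, and the super-polynomial decay from the union bound is far more than enough to yield the stated $O(n^{1-\alpha})$ for any $\alpha < \gamma$.
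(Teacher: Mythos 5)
Your proposal is correct and follows essentially the same route as the paper: both express the count as a binomial sum of indicators, compute $\E N_n = n\,\Prob\bigl(\xi \ge \sqrt{\E\xi n/4}\bigr) = n^{1-\gamma/2+o(1)}$, and exploit the polynomial gap between this mean and the threshold $n^{1-\gamma/2+\delta}$ to get a super-polynomially small (hence $O(n^{1-\alpha})$) failure probability. The only difference is the final tail estimate --- the paper applies a Chernoff bound to $\Prob\left(S'_n > 2\,\E S'_n\right)$, while you use the union bound $\binom{n}{k}p_n^k \le (e n p_n/k)^k$ --- and both close the argument equally well.
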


\begin{proof}

Let

$$
S'_n := \sum_{i=1}^n I\left[\xi_i \geq \sqrt{\frac{\E\xi n}{4}}\right] \,.
$$

The expectation of $S'_n$ is
\begin{multline*}
\E S'_n = \E\sum_{i=1}^n I\left[\xi_i\geq\sqrt{\frac{\E\xi n}{4}}\right] =
n\Prob\left(\xi\geq\sqrt{\frac{\E\xi n}{4}}\right)
\\
= n\left({\frac{\E\xi n}{4}}\right)^{-\gamma/2}L\left(\sqrt{\frac{\E\xi n}{4}}\right) \,.
\end{multline*}
We can apply Chernoff bound:
$$
\Prob \left( S'_n  > 2 \, \E S'_n \right) \le
\exp \left(\E S'_n / 3 \right)
= O \left(n^{1-\alpha} \right) \,.
$$


It remains to note that for large enough $n$ we have
$$
2 \E S_n' < n^{1-\frac{\gamma}{2}+\delta}\,.
$$

\end{proof}

\begin{lem}\label{lem:small}
For any $\delta > 0$  and any $\alpha$ such that $1<\alpha<2 - \frac \gamma 2$  we have
$$
\Prob\left(\sum_{i: \xi_i^2<\frac{3 \E\xi n}{4}} \xi_i^2 \le  n^{2-\gamma/2 + \delta}  \right) =  1 - O\left(n^{1-\alpha}\right)\,.
$$
\end{lem}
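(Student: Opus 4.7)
The plan is to estimate the truncated second-moment sum by controlling its mean and variance separately and then applying Chebyshev's inequality. Set $K := \sqrt{3\,\E\xi\, n/4}$ and $Y_i := \xi_i^2\, I[\xi_i<K]$, so that $S_n := \sum_{i=1}^n Y_i$ is the quantity to be bounded. The $Y_i$ are i.i.d., hence $\E S_n = n\,\E Y_1$ and $\mathrm{Var}(S_n) = n\,\mathrm{Var}(Y_1) \le n\,\E Y_1^2$.

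First I would compute $\E Y_1 = \int_0^K x^2\, dF(x)$ by exactly the integration-by-parts manipulation used for $\E S_n$ in the proof of Theorem~\ref{thm:existence}: the boundary term contributes $K^2(1-F(K)) = K^{2-\gamma}L(K)$, while Karamata's theorem in the branch $\alpha>-1$ (applied with $\alpha=1-\gamma\in(-1,0)$, using $1<\gamma<2$) shows that $\int_0^K x(1-F(x))\,dx$ is of the same order. Hence $\E Y_1 = O(n^{1-\gamma/2}L(\sqrt n))$ and $\E S_n = O(n^{2-\gamma/2}L(\sqrt n))$. Since $L(\sqrt n)=o(n^c)$ for every $c>0$, this gives $\E S_n = o(n^{2-\gamma/2+\delta/2})$, so for all sufficiently large $n$ the event $\{S_n > n^{2-\gamma/2+\delta}\}$ is contained in $\{|S_n-\E S_n|>\tfrac12\,n^{2-\gamma/2+\delta}\}$.

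Next I would handle the variance analogously. The truncation at $K$ makes every moment of $Y_1$ finite, and the same integration by parts applied to $\E Y_1^2 = \int_0^K x^4\,dF(x)$, now invoking Karamata with exponent $3-\gamma>-1$, gives $\E Y_1^2 = O(K^{4-\gamma}L(K)) = O(n^{2-\gamma/2}L(\sqrt n))$, so $\mathrm{Var}(S_n) = O(n^{3-\gamma/2}L(\sqrt n))$. Chebyshev's inequality then yields
$$
\Prob\!\left(|S_n-\E S_n|>\tfrac12\,n^{2-\gamma/2+\delta}\right) \le \frac{4\,\mathrm{Var}(S_n)}{n^{4-\gamma+2\delta}} = O\!\left(n^{-1+\gamma/2-2\delta}\,L(\sqrt n)\right).
$$
Absorbing the slowly varying factor into an arbitrarily small positive power of $n$ and using the hypothesis $\alpha<2-\gamma/2$ (which yields $-1+\gamma/2<1-\alpha$), the right-hand side is $O(n^{1-\alpha})$, exactly the bound required.

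The argument is almost entirely mechanical. The truncation forces all relevant moments of $Y_1$ to exist and, via Karamata, places them at the correct polynomial order, so there is no real probabilistic obstacle. The only point to verify carefully is that the admissible range $1<\alpha<2-\gamma/2$ in the hypothesis matches exactly the exponent coming out of Chebyshev applied to $\mathrm{Var}(S_n)$ of order $n^{3-\gamma/2}$ against the threshold $n^{2-\gamma/2+\delta}$; this is the only genuine bookkeeping step.
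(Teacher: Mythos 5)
Your proposal is correct and follows essentially the same route as the paper: the same truncated sum $S_n=\sum_i\xi_i^2 I[\xi_i<\sqrt{3\E\xi\,n/4}]$, the same integration-by-parts-plus-Karamata computation of its first and second truncated moments, and the same Chebyshev bound, with the condition $\alpha<2-\gamma/2$ entering at exactly the same point. The only (harmless) cosmetic difference is that you measure the deviation against the target threshold $\tfrac12 n^{2-\gamma/2+\delta}$ rather than against $\E S_n/2$ as the paper does.
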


\begin{proof}
Let
$$
S_n = \sum_{i=1}^n \xi_i^2 I\left[ \xi_i< \sqrt{\frac{3 \E\xi n}{4}} \right]\,.
$$
Again, we first estimate the expectation of $S_n$. Since $L(x)$ is locally bounded we can apply Karamata's theorem:
\begin{multline*}
\E S_n = - n \int_{0}^{\sqrt{\frac{3 \E\xi n}{4}}} x^2 d (1-F(x))
\\
= - n \, x^2 (1- F(x)) \bigg|_{0}^{\sqrt{\frac{3 \E\xi n}{4}}} + 2 n \int_{0}^{\sqrt{\frac{3 \E\xi n}{4}}} x (1-F(x)) \, d x
\\
= - n \, \left(\frac{3 \E\xi n}{4}\right)^{1-\gamma/2} L\left(\sqrt{\frac{3 \E\xi n}{4}}\right) + 2 n \int_{0}^{\sqrt{\frac{3 \E\xi n}{4}}} x^{1-\gamma}L(x) \, d x
\\
\sim - n \, \left(\frac{3 \E\xi n}{4}\right)^{1-\gamma/2} L\left(\sqrt{n}\right) + 2 n (2-\gamma)^{-1} \left(\frac{3 \E\xi n}{4}\right)^{1-\gamma/2}  L\left(\sqrt{n}\right)
\\
=\frac{\gamma}{2-\gamma} \left(\frac{3 \E\xi}{4}\right)^{1-\gamma/2} n^{2-\gamma/2} L\left(\sqrt n \right)\,.
\end{multline*}
In order to show concentration we first estimate
\begin{multline*}
\E \left(\xi^2 I\left[ \xi< \sqrt{\frac{3 \E\xi n}{4}} \right]\right)^2 =
- \int_{0}^{\sqrt{\frac{3 \E\xi n}{4}}} x^{4} \, d (1- F(x))
\\
= - x^{4} (1- F(x)) \bigg|_{0}^{\sqrt{\frac{3 \E\xi n}{4}}} + \int_{0}^{\sqrt{\frac{3 \E\xi n}{4}}}  (1-F(x)) \, d x^{4}
\\
=  - \left(\frac{3 \E\xi n}{4}\right)^{2-\gamma/2} L\left(\sqrt{\frac{3 \E\xi n}{4}}\right) + 4 \int_{0}^{\sqrt{\frac{3 \E\xi n}{4}}} x^{4-\gamma-1}L(x) \, d x
\\
\sim - \left(\frac{3 \E\xi n}{4}\right)^{2-\gamma/2} L\left(\sqrt{n}\right) +
 (4-\gamma)^{-1} \left(\frac{3 \E\xi n}{4}\right)^{2-\gamma/2}  L\left(\sqrt{n}\right)
\\
= O\left(n^{2-\gamma/2} L\left(\sqrt{n}\right)\right)\,.
\end{multline*}
And we get
\begin{multline*}
\Prob \left( |S_n - \E S_n| > \frac{\E S_n}{2} \right) \le
\frac{4 \mathrm{Var} (S_n) }{(\E S_n)^2} 
\le \frac{4 n \E \left(\xi^2 I\left[ \xi< \sqrt{\frac{3 \E\xi n}{4}} \right]\right)^2}{(\E S_n)^2} \\
= O \left(  \frac{n^{3-\gamma/2} L\left(\sqrt{n}\right)}{n^{4-\gamma} \left(L\left(\sqrt{n}\right)\right)^2} \right) 
= O \left(  \frac{n^{\gamma/2 - 1} }{ L\left(\sqrt{n}\right) } \right)
= O\left(n^{1 -\alpha} \right)\,.
\end{multline*}

Here in the last equation we use the inequality $\alpha < 2 - \frac{\gamma}{2} $.

It remains to note that for large enough $n$ we have
$$
\frac{3\,\gamma}{2(2-\gamma)} \left(\frac{3 \E\xi}{4}\right)^{1-\gamma/2} n^{2-\gamma/2} L\left(\sqrt n \right) \le n^{2-\gamma/2+\delta}\,.
$$

This concludes the proof of Lemma~\ref{lem:small}

\end{proof}

Theorem~\ref{thm:cluster} follows immediately from Lemmas~\ref{lem:p2}, \ref{lem:large}, \ref{lem:small}, and Equation~\eqref{eq:clust}.

\section{Experiments}\label{sec:experiments}

In the previous section, we proved that for any sequence of graphs with a regularly varying degree distribution with a parameter $1 < \gamma < 2$ the global clustering coefficient tends to zero at least as fast as
$n^{-\frac{\left(\gamma - 2\right)^2}{2\gamma}}$.
In this case the number of pairs of adjacent edges is superlinear in the number of vertices and it grows faster than the number of triangles.


In this section, we present a simple method which allows to construct scale-free graphs with a superlinear number of triangles.
Consider a sequence of graphs constructed according to Havel-Hakimi algorithm.
On Figure~\ref{fig:triangles} we present the number of triangles, the number of pairs of adjacent edges, and the global clustering coefficient for such graphs.
For each $n$ we averaged the results over 100 independent samples of power-law degree distribution.
Note that for $\gamma > 2$ the number of pairs of adjacent edges grows linearly and  for $1<\gamma<2$ it grows as $n^{2/\gamma}$, as expected.
The number of triangles grows linearly for $\gamma > 2$ and grows as $n^{3/(\gamma+1)}$ for $1<\gamma<2$.
The constant $3/(\gamma+1)$ can be explained in the following way.
If the degree distribution follows the power law with a parameter $\gamma$, then the maximum clique which can be obtained is of size $n^{\frac{1}{\gamma+1}}$ since $d_k \approx k$ for $k \sim n^{\frac{1}{\gamma+1}}$. This clique gives ${k \choose 3}$ triangles.
Since Havel-Hakimi algorithm also connects the vertices of largest degrees to each other, we get $\sim n^{3/(\gamma+1)}$ triangles.

To sum up, we can construct a sequence of graphs with $n^{\frac{3}{\gamma+1}}$ triangles and our theoretical upper bound is $n^{2 - \frac \gamma 2}$. It is easy to see that for $1 < \gamma < 2$ we have $\frac{3}{\gamma+1} < 2 - \frac \gamma 2$. So, there is a gap between the number of constructed triangles and the upper bound.

\begin{figure}
\includegraphics[width = 0.8\textwidth]{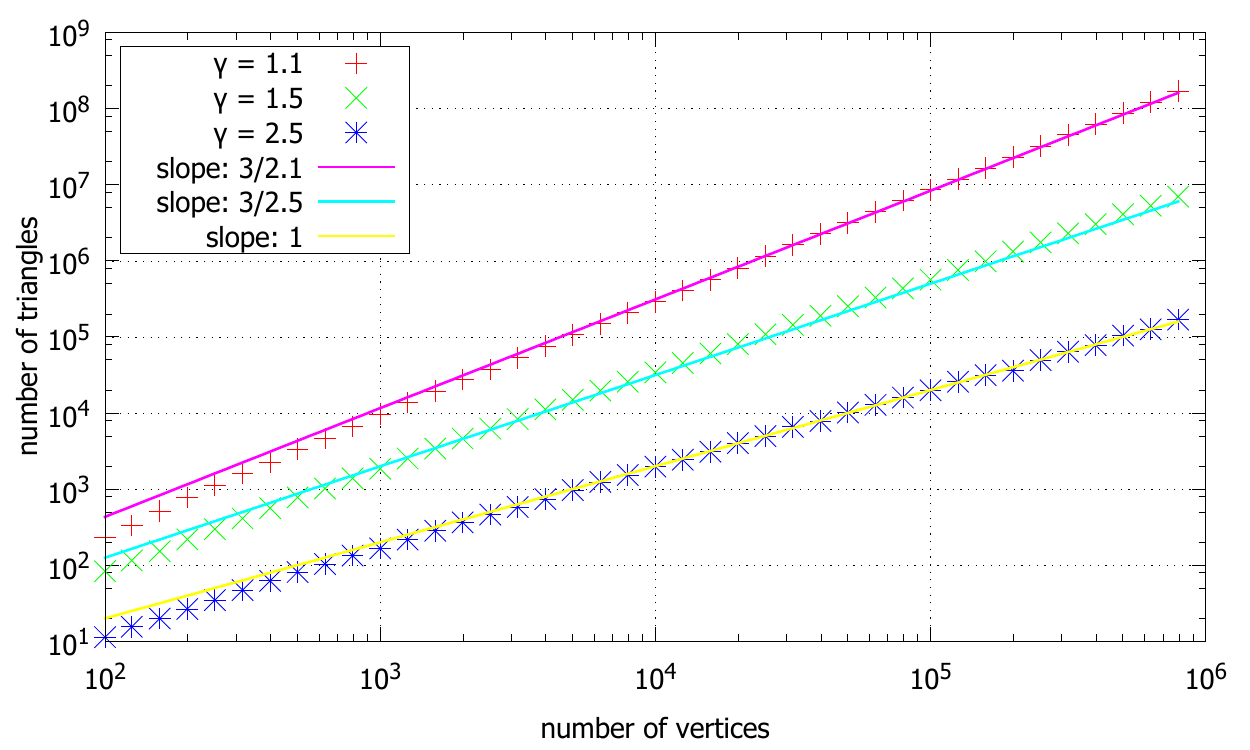}
\includegraphics[width = 0.8\textwidth]{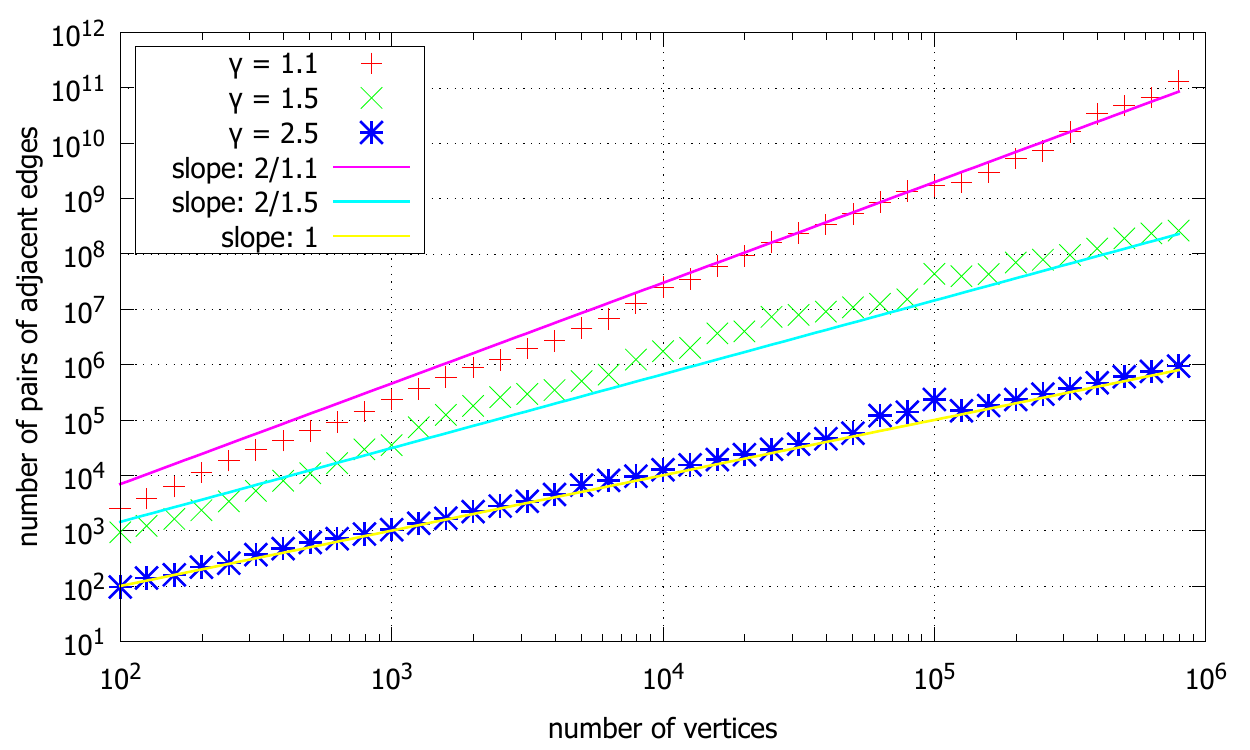}
\includegraphics[width = 0.8\textwidth]{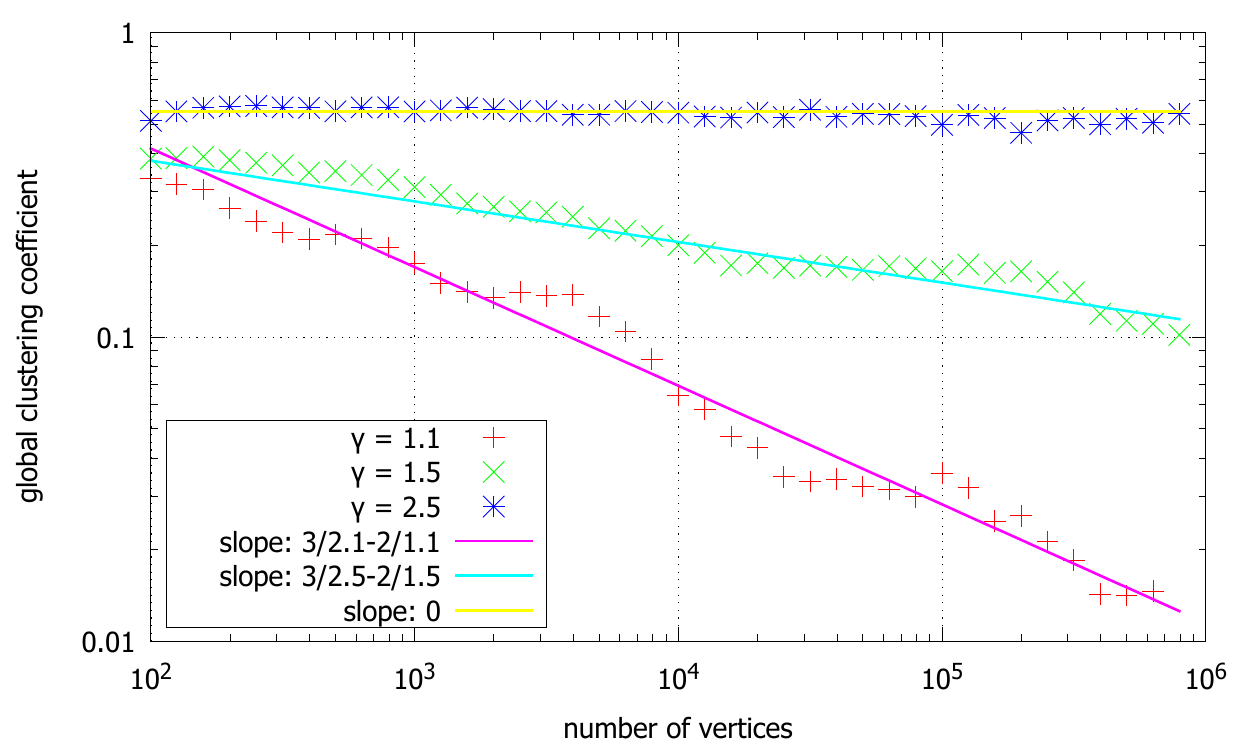}
\caption{Global clustering coefficient for graphs constructed according to Havel-Hakimi algorithm}
\label{fig:triangles}
\end{figure}

\section{Conclusion}\label{sec:conclusion}

In this paper, we analyzed the global clustering coefficient in scale-free graphs.
We proved that for any sequence of graphs with a regularly varying degree distribution with a parameter $1 < \gamma < 2$ the global clustering coefficient tends to zero with high probability.
We also proved that with high probability a graph with the required degree distribution exists.

Finally, we demonstrated the construction procedure which allows to obtain the sequence of graphs with superlinear number of triangles. Unfortunately, the number of triangles in this case grows slower than the upper bound obtained in Section~\ref{sec:global_clustering}.

\end{document}